\documentclass[12pt]{amsart}

\usepackage{xcolor}

\usepackage[spanish,english]{babel}
\usepackage[latin1]{inputenc}
\usepackage{latexsym}

\usepackage{graphicx}
\usepackage{booktabs,colortbl}

\usepackage[mathscr]{eucal}

\usepackage{amsmath, amsthm, amscd, amsfonts, amssymb, graphicx, color, xypic}
\usepackage[all]{xy}

\newtheorem{theorem}{Theorem}

\newtheorem{corollary}[theorem]{Corollary}
\newtheorem{example}[theorem]{Example}

\newtheorem{remark}[theorem]{Remark}

\newtheorem{definition}[theorem]{Definition}

\begin{document}

		\title[McShane-Whitney extensions for fuzzy Lipschitz maps]{A revised and extended version of McShane-Whitney extensions for fuzzy Lipschitz maps}

	
	\author[E. Jim\'enez-Fern\'andez et al.]{Eduardo Jim\'enez-Fern\'andez}
	\address[E. Jim\'enez-Fern\'andez]{Departamento de Teor\'{\i}a e Historia Econ\'omica, Campus Universitario de Cartuja, Universidad de Granada, 18071 Granada, Spain}
	\email{edjimfer@ugr.es}
	
	\author[]{Jes\'us Rodr\'{\i}guez-L\'opez}
	\address[J. Rodr\'{\i}guez-L\'opez]{Instituto Universitario de Matem\'atica Pura y Aplicada, Universitat Polit\`ecnica de Val\`encia, Camino de Vera s/n, 46022 Valencia, Spain}\thanks{Last author's research
		is part of the project PID2022-139248NB-I00 funded by MICIU/AEI/10.13039/501100011033 and ERDF/EU}
	\email{jrlopez@mat.upv.es}
	
	\author[]{Aurora S\'anchez-Mart\'{\i}n-Orozco}
	\address[A. S\'anchez-Mart\'{\i}n-Orozco]{Instituto Universitario de Matem\'atica Pura y Aplicada, Universitat Polit\`ecnica de Val\`encia, Camino de Vera s/n, 46022 Valencia, Spain}
	\email{aurora.sanchez@uv.es}
	
	\author[]{Enrique A. S\'anchez-P\'erez}
	\address[E. A. S\'anchez-P\'erez]{Instituto Universitario de Matem\'atica Pura y Aplicada, Universitat Polit\`ecnica de Val\`encia, Camino de Vera s/n, 46022 Valencia, Spain}
	\email{easancpe@mat.upv.es}

	\subjclass[2010]{26A16; 54E70; 54C20.}

		\begin{abstract}
			In the paper [E. Jim\'enez-Fern\'andez, J. Rodr\'{\i}guez-L\'opez, E. A. S\'anchez-P\'erez, Fuzzy Sets and Systems 406 (2021),66-81], a McShane-Whitney extension theorem is presented for real-valued fuzzy Lipschitz maps between fuzzy metric spaces. Specifically, the codomain space is considered as a so-called Euclidean fuzzy metric space $(\mathbb{R},M_{\phi,g},\ast).$ However, while the function $\phi$ is only required to be increasing, some results of the paper implicitly assume that $\phi$ is invertible, even though this is not explicitly stated. We propose here an alternative possibility that only requires $\phi$ to be also left-continuous. 
		\end{abstract}
		
		\keywords{fuzzy metric; fuzzy Lipschitz map; extension; McShane-Whitney Theorem.}

	\date{}
	\maketitle

	The concept of a Euclidean fuzzy metric space was introduced in \cite{JFRLSP21}. Here, we present a significant subcategory of these fuzzy metric spaces that will be crucial for our subsequent discussion.

	\begin{definition}[\cite{JFRLSP21}]
		A fuzzy metric $(M,\ast)$ on $\mathbb{R}$ is said to be a \emph{Euclidean fuzzy metric} if there are  functions  $\phi:[0,+\infty) \to [0,+\infty), g:(0,+\infty)\to (0,+\infty)$ such that
		$\phi$ is increasing, and
		$$
		M(x,y,t)=1- \phi(|x-y|) g(t)
		$$
		for all $x,y\in \mathbb{R}$, $t>0.$
		In this case we will denote $M$ by $M_{\phi,g}$  and we will say that $(\mathbb{R},M_{\phi,g},\ast)$ is a Euclidean fuzzy metric space.
		
		If $\phi$ is also defined at $+\infty$ $(\phi(+\infty)\in [0,+\infty])$ and $\phi$ is also left-continuous, then we say that $(\mathbb{R},M_{\phi,g},\ast)$ is a left-continuous Euclidean fuzzy metric space.
	\end{definition}
	
	\begin{remark}\label{rem1}
		Notice that if $(M_{\phi,g},\ast)$ is a Euclidean fuzzy metric on $\mathbb{R},$ then
		\begin{itemize}
			\item[(1)] $\phi^{-1}(0)=\{0\};$
			\item[(2)] $\phi$ is bounded{\rm ;}
			\item[(3)] $g$ is bounded by $\frac{1}{\sup\{\phi(t):t>0\}}.$
		\end{itemize}  Proof. 
		
		\noindent (1) Given $x\in \mathbb{R},$ then $M_{\phi,g}(x,x,t)=1-\phi(0)g(t)=1$ for all $t>0.$ If $\phi(0)\neq 0,$ then $g(t)=0$ for all $t>0$ so $M_{\phi,g}(x,y,t)=1$ for all $x,y\in \mathbb{R}.$ Hence $(M_{\phi,g},\ast)$ is not a fuzzy metric, a contradiction. Therefore $\phi(0)=0.$ 
		
		Moreover, $\phi(t)\neq 0$ for every $t>0.$  Otherwise, there exists $s>0$ such that $\phi(s)=0$ so $M_{\phi,g}(s,0,t)=1-\phi(|s-0|)g(t)=1$ for all $t>0,$ which is a contradiction. 
		
		\noindent (2) If $\phi$ is not bounded, given $t>0,$ we can find $s>0$ such that $\phi(s)>\frac{1}{g(t)}$. Hence
		$$M_{\phi,g}(s,0,t)=1-\phi(s)g(t)<0$$
		which is not possible since $M_{\phi,g}$ is a fuzzy set.	
		
		\noindent (3)  If $g$ is not bounded by $\frac{1}{\sup\{\phi(t):t>0\}},$ we can find $t',s>0$ such that $g(s)>\frac{1}{\phi(t')}$ so
		$$M_{\phi,g}(t',0,s)=1-\phi(t')g(s)<0,$$
		and this is not possible.

	\end{remark}

	\begin{remark}
		Notice that given $\phi:[0,+\infty]\to [0,+\infty],$ we can define $\phi^*:[0,+\infty]\to [0,+\infty]$ as
		$$\phi^*(y)=\sup\{x\in [0,+\infty]:\phi(x)\leq y\},$$
		for all $y\in [0,+\infty]$ $(\sup\varnothing =0).$ Then it is easy to check that 
		$$\mathrm{id}\leq \phi^*\circ \phi,$$
		where $\mathrm{id}$ is the identity function on $[0,+\infty].$ Moreover, if $\phi$ is increasing, left-continuous and $\phi(0)=0,$ then
		$$\phi\circ\phi^\ast\leq \mathrm{id}.$$
		For proving this, let $y\in [0,+\infty].$ Then
		$$\phi(\phi^\ast (y))=\phi\big(\sup \{x\in [0,+\infty]:\phi(x)\leq y\}\big)=\sup\{\phi(x): \phi(x)\leq y\}\leq y,$$
		where, in the second equality, we have used that $\phi$ is left-continuous and increasing.
		
		We observe that the existence of $\phi^*$ fulfilling the two previous inequalities is equivalent to stating that $\phi$ possesses a right-adjoint, that is, $\phi$ is join-preserving \cite{BookQuantales}.
	\end{remark}

	\begin{remark}
		If $(M_{\phi,g},\ast)$ is a left-continuous Euclidean fuzzy metric on $\mathbb{R}$ with respect to $\phi,g,$ then $\phi([0,+\infty))\subseteq [0,+\infty),$ so $(M_{\hat{\phi},g},\ast)$ is a Euclidean fuzzy metric, where $\hat{\phi}$ denotes the restriction to $[0,+\infty)$ of $\phi$.
		
		Furthermore, given an increasing function $\phi:[0,+\infty)\to [0,+\infty),$ we can construct $\bar{\phi}:[0,+\infty]\to [0,+\infty]$ as
		$$\bar{\phi}(x)=\sup \{\phi(y):y<x\},$$
		for all $x\in [0,+\infty].$ Then $\bar{\phi}$ is increasing and left-continuous.
	\end{remark}

	In Section 5 of \cite{JFRLSP21}, the authors study the problem of extending fuzzy Lipschitz functions defined from a fuzzy metric space $(X,M,\ast)$ to a Euclidean fuzzy metric space $(\mathbb{R},N_{\phi,g},\circledast).$ Nevertheless, their proofs require the existence of the inverse of $\phi$ on $[0,+\infty).$ This requirement cannot be assumed for an arbitrary Euclidean fuzzy metric space because $\phi$ is only an increasing function, and as noted in Remark \ref{rem1}, $\phi$ is bounded.

	Nevertheless, if we consider $(\mathbb{R},N_{\phi,g},\circledast)$ as a left-continuous Euclidean fuzzy metric space, we can always guarantee the existence of the right-adjoint $\phi^*.$ This right-adjoint appears to be a suitable candidate for replacing the inverse $\phi^{-1}$ in the proofs presented in \cite{JFRLSP21}. Nonetheless, this does not fully resolve the issue, as we will explain next.
	
	If $(\mathbb{R},M_{\phi,g},\ast)$ is a left-continuous Euclidean fuzzy metric space, then $\phi$ is bounded. If $K=\sup\{\phi(t):t\in [0,+\infty]\}=\phi(+\infty),$ since $\phi$ is left-continuous and increasing, then 
	$$\phi^*(s)=\sup\{t\in [0,+\infty]:\phi(t)\leq s\}=\sup \{t\in [0,+\infty]\}=+\infty,$$
	for every $s\geq K.$ Hence, $\phi^*$ always takes the value $+\infty.$ This is an unpleasant fact that causes that we also need an extra boundedness assumption. 
	In this way, we can reformulate \cite[Theorem 21]{JFRLSP21} as follows:

	\begin{theorem} \label{mainth}
		Let $(X,M,\ast)$ be a fuzzy metric space and $(\mathbb{R},N_{\phi,g},\circledast)$ be a left-continuous Euclidean fuzzy metric space. Let $S\subseteq X$ and suppose that  $f:(S,M,\ast) \times (0,+\infty) \to  (\mathbb R, N_{\phi,g},  \circledast)$
		is a fuzzy Lipschitz map, that is, for each $t>0$ there exists $K(t)>0$ such that 
		$$1 - N_{\phi,g}( f (x,t), f (y,t), t)\leq K(t)(1 - M(x, y, t)),$$
		for every $x,y\in S.$
		Suppose that 
		$$\frac{K(t)}{g(t)} \, \big( 1- M(x,y,t) \big)< \phi(+\infty),$$
		for all $x,y\in X$, $t>0.$ 	Then the function $f$ can be extended as a fuzzy Lipschitz map to $X \times (0,+\infty).$
	\end{theorem}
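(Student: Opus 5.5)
The plan is to adapt the McShane construction of \cite[Theorem 21]{JFRLSP21}, with the right-adjoint $\phi^*$ in place of $\phi^{-1}$. The two adjunction inequalities $\mathrm{id}\leq\phi^*\circ\phi$ and $\phi\circ\phi^*\leq\mathrm{id}$ of the second Remark would replace $\phi^{-1}\circ\phi=\phi\circ\phi^{-1}=\mathrm{id}$. The extra hypothesis $\frac{K(t)}{g(t)}(1-M(x,y,t))<\phi(+\infty)$ is meant to keep $\phi^*$ finite on every argument that occurs.

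Fix $t>0$ and write $d_t(x,y)=1-M(x,y,t)$. For $x\in X$ I would define
$$F(x,t)=\inf_{s\in S}\Big\{f(s,t)+\phi^*\Big(\tfrac{K(t)}{g(t)}\,d_t(x,s)\Big)\Big\}.$$

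First I would check that $\phi^*$ is finite on these arguments. If $y<\phi(+\infty)=\sup_{u<+\infty}\phi(u)$, left-continuity gives a finite $u_0$ with $\phi(u_0)>y$. Since $\phi$ is increasing, every $u$ with $\phi(u)\leq y$ satisfies $u<u_0$, so $\phi^*(y)\leq u_0<+\infty$. Also $\phi^*(0)=0$, because $\phi^{-1}(0)=\{0\}$ by Remark~\ref{rem1}(1).

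Next I would show that $F$ extends $f$. Fix $x,s\in S$. The Lipschitz hypothesis reads $\phi(|f(x,t)-f(s,t)|)\,g(t)\leq K(t)\,d_t(x,s)$. Applying the increasing map $\phi^*$ and using $\mathrm{id}\leq\phi^*\circ\phi$ gives
$$|f(x,t)-f(s,t)|\leq\phi^*\Big(\tfrac{K(t)}{g(t)}d_t(x,s)\Big).$$
Hence every term of the infimum is at least $f(x,t)$. The term $s=x$ equals $f(x,t)+\phi^*(0)=f(x,t)$, so $F(x,t)=f(x,t)$. The same inequality, together with a triangle-type estimate for $d_t$, should give a lower bound for $F(x,t)$ and hence $F(x,t)>-\infty$.

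For the Lipschitz estimate of $F$, take $x,y\in X$ and an almost-minimizing $s$ for $F(y,t)$. This yields
$$F(x,t)-F(y,t)\leq\phi^*\Big(\tfrac{K}{g}d_t(x,s)\Big)-\phi^*\Big(\tfrac{K}{g}d_t(y,s)\Big)+\varepsilon,$$
and symmetrically with $x,y$ exchanged. I would then apply $\phi$ and use $\phi\circ\phi^*\leq\mathrm{id}$, aiming for
$$\phi(|F(x,t)-F(y,t)|)\leq\tfrac{K'(t)}{g(t)}\,d_t(x,y),$$
which is the fuzzy Lipschitz inequality $1-N_{\phi,g}(F(x,t),F(y,t),t)\leq K'(t)(1-M(x,y,t))$.

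\textbf{Main obstacle.} This last step is where I expect the real difficulty. $\phi^*$ is neither subadditive nor an exact inverse, and $d_t$ satisfies no plain triangle inequality: the fuzzy triangle law mixes the parameters, $M(x,z,t+s)\geq M(x,y,t)\ast M(y,z,s)$. My first attempt would follow the route of \cite{JFRLSP21} line by line, replacing each use of $\phi^{-1}$ by $\phi^*$ and each identity by the corresponding adjunction inequality, and see whether the directions of the inequalities survive. If the difference of the two $\phi^*$-terms cannot be controlled through $d_t(x,y)$ alone, a fallback uses the bound $1-N_{\phi,g}\leq\phi(+\infty)g(t)$, which is valid because $\phi$ is bounded. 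That bound handles pairs with $d_t(x,y)$ bounded away from $0$, leaving only the regime $d_t(x,y)\to0$ to treat by the adjunction argument.
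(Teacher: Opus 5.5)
Your preliminary steps are correct: $\phi^*$ is finite below $\phi(+\infty)$, $\phi^*(0)=0$, and $F(\cdot,t)=f(\cdot,t)$ on $S$. But the plan has a genuine gap exactly where you locate it. Put $\rho_t(x,y)=\phi^*\big(\frac{K(t)}{g(t)}(1-M(x,y,t))\big)$. Your Whitney-type formula needs the triangle inequality $\rho_t(x,s)\le\rho_t(x,y)+\rho_t(y,s)$ twice: for the lower bound $F(x,t)>-\infty$ and for the estimate $F(x,t)-F(y,t)\le\rho_t(x,y)$. The hypotheses do not give it. For fixed $t$, $1-M(\cdot,\cdot,t)$ need not satisfy a triangle inequality, and $\phi^*$ need not be subadditive. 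This is why Corollary~\ref{corex} assumes that $\rho_t$ is a metric; under that assumption your $F$ is exactly $f^W$ and your argument closes with $K'=K$.

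Your fallback cannot repair this. The bound $1-N_{\phi,g}\le\phi(+\infty)g(t)$ only controls pairs with $1-M(x,y,t)$ bounded away from $0$, and the failure of the triangle inequality is a small-scale phenomenon. The paper handles the missing triangle inequality differently. It replaces $\rho_t$ by the chain pseudometric $\bar\rho_t(x,y)=\inf\sum_{i=1}^n\rho_t(x_i,x_{i+1})$, taken over finite chains from $x$ to $y$ in $X$. This is a genuine pseudometric with $\bar\rho_t\le\rho_t$. The paper then applies the classical McShane--Whitney theorem on $(X,\bar\rho_t)$ and returns through $\phi\circ\phi^*\le\mathrm{id}$.

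Be aware, though, that this device does not close the gap either, and in fact nothing can. The paper's step $|f(x,t)-f(y,t)|\le\bar\rho_t(x,y)$ for $x,y\in S$ would need (\ref{eq:Lip}) on every link of a chain. But (\ref{eq:Lip}) is only known for pairs in $S$, while the chains run through $X$.

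Here is a counterexample to the statement. Let $X=\mathbb{R}$ and $M(x,y,t)=\frac{t}{t+(x-y)^2}$ with the product t-norm; the triangle axiom follows from $\frac{(a+b)^2}{t+s}\le\frac{a^2}{t}+\frac{b^2}{s}$. Take $N_E(x,y,t)=1-\frac{\min\{|x-y|,1\}}{2}$ with $\ast_{\text{\L}}$, so $\phi(x)=\frac{\min\{x,1\}}{2}$, $g\equiv 1$ and $\phi(+\infty)=\frac12$. Let $S=\{0,1\}$, $f(0,t)=0$, $f(1,t)=\frac{1}{1+t}$ and $K(t)=\frac12$.

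The Lipschitz inequality on $S$ holds with equality, and $\frac{K(t)}{g(t)}(1-M(x,y,t))=\frac{(x-y)^2}{2(t+(x-y)^2)}<\frac12$ for all $x,y$. Yet any fuzzy Lipschitz extension $\hat f$, with any constant $K'(t)$, satisfies $|\hat f(x,t)-\hat f(y,t)|\le\frac{2K'(t)}{t}(x-y)^2$ whenever $K'(t)(x-y)^2<\frac{t}{2}$. Summing over $n$ equal steps from $0$ to $1$ gives $|\hat f(1,t)-\hat f(0,t)|\le\frac{2K'(t)}{tn}\to 0$. This contradicts $\hat f(1,t)=\frac{1}{1+t}$. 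Here, in fact, $\bar\rho_t\equiv 0$.

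So the obstacle you flagged is essential, not technical. The result needs an extra hypothesis, such as the metric assumption on $\rho_t$ in Corollary~\ref{corex}, under which your proof works as written.
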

	
	\begin{proof}
		Since $f:(S,M,\ast) \times (0,+\infty) \to  (\mathbb R, N_{\phi,g}, \circledast)
		$ is a fuzzy Lipschitz map, then for each $t>0$
		there is a positive real number $K(t)$ such that
		\begin{align*}
			1- N_{\phi,g}(f(x,t),f(y,t),t) & \le K(t) \, \Big( 1- M(x,y,t) \Big),\\
			\phi(|f(x,t)-f(y,t)|) &\le \frac{K(t)}{g(t)} \, \Big( 1- M(x,y,t) \Big)
		\end{align*}
		for all $x,y\in S.$  Since  $\mathrm{id}\leq \phi^*\circ \phi$ and $\phi^*$ is increasing, then
		\begin{equation}
			|f(x,t)-f(y,t)| \le (\phi^*\circ\phi)(|f(x,t)-f(y,t)| )\le \phi^{*} \left( \frac{K(t)}{g(t)} \, \big( 1- M(x,y,t) \big) \right)\!,	\label{eq:Lip}
		\end{equation}
		for all $x, y \in S.$ 
		
		For each $t>0,$ define $d_t:X\times X\to [0,+\infty)$ as
		
		$$
		d_t(x,y)=\inf_{n\in\mathbb{N}} \left\{ \sum_{i=1}^n \phi^{*}\left( \frac{K(t)}{g(t)} (1 - M(x_i,x_{i+1},t)) \right) \;\middle|\; x_1{=}x,\; x_{n+1}{=}y,\; x_i \in X \right\}\!,
		$$
		for all $x,y\in X$. Since $\frac{K(t)}{g(t)} \, \big( 1- M(x,y,t) \big)< \phi(+\infty)$
		for all $x,y\in X,$ then $d_t$ is finite. 
		
		It is straightforward to show that $d_t$ is a pseudometric on $X.$
		
		Due to the fact that $|\cdot|$ is a norm on $\mathbb{R}$, we automatically get from the inequality (\ref{eq:Lip}) above that for each $t>0$
		$$|f(x, t)-f(y, t)|\leq d_t(x,y)$$
		for all $x,y\in S.$ Consequently, $f(\cdot,t):(X,d_t)\to (\mathbb{R},|\cdot|)$ is a Lipschitz function. So we can apply the classical McShane-Whitney extension theorem to $f(\cdot,t)$ for all $t>0,$ obtaining a Lipschitz extension $\hat f(\cdot, t)$. Then 
		\[
		|\hat f(x,t)- \hat f(y,t)| \le d_{t}(x,y)\leq \phi^{*} \left(\frac{K(t)}{g(t)} \, \big( 1- M(x,y,t) \big) \right)\!,
		\]
		for all $x,y\in X$ and since $\phi \circ \phi^* \leq \mathrm{id},$ we have that
		
		\[
		\phi(|\hat f(x,t)- \hat f(y,t)|) \le (\phi\circ \phi^{*}) \left(\frac{K(t)}{g(t)} \, \big( 1- M(x,y,t) \big) \right)\le \frac{K(t)}{g(t)} \, \big( 1- M(x,y,t)\big).
		\]
		
		Therefore
		\[
		1- N_{\phi,g} (\hat f(x,t), \hat f(y,t),t) = \phi(|\hat f(x,t)-\hat f(y,t)|) g(t) \le K(t) \, \big( 1- M(x,y,t) \big),
		\]
		for all $x,y\in X.$ Since $t>0$ is arbitrary, then $\hat f:X\times (0,+\infty)\to Y$ is fuzzy Lipschitz.
	\end{proof}

	Moreover, \cite[Corollay 22]{JFRLSP21} would be stated as follows.
	
	\begin{corollary}  \label{corex}
		Let $(X,M,\ast)$ be a fuzzy metric space and  $(\mathbb R, N_{\phi,g}, \circledast)$ be a left-continuous Euclidean fuzzy metric space. Let $S \subseteq X$ and suppose that $f:(S,M,\ast) \times (0,+\infty) \to (\mathbb R,  N_{\phi,g}, \circledast)$
		is a fuzzy Lipschitz map  with extended dilation $K(t).$ Assume also that for every $t>0$ the map  $\rho_t:X\times X\to [0,+\infty)$ given by
		$$\rho_{t}(x,y) :=\phi^{*} \left(\frac{K(t)}{g(t)} \, \big( 1- M(x,y,t) \big) \right)\!,  \quad x,y \in X,
		$$
		is a metric on $X$.
		Then the function $f$ can be extended as a fuzzy Lipschitz map to $X \times (0,+\infty).$ 
		
		Moreover, two suitable extensions of $f$ are
		$$f^M(x,t)=\sup_{s\in S}\left\{f(s,t)-\phi^{*}\left(\frac{K(t)}{g(t)}(1-M(x,s,t))\right)\right\}\!,$$
		$$f^W(x,t)=\inf_{s\in S}\left\{f(s,t)+\phi^{*}\left(\frac{K(t)}{g(t)}(1-M(x,s,t))\right)\right\}\!,$$
		for all $x\in X$ and all $t>0.$
		
	\end{corollary}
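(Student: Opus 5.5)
Fix $t>0$ and work with the metric $\rho_t$, which is a genuine metric on $X$ by hypothesis; in particular it takes finite values. The point is that, when $\rho_t$ is already a metric, the path-infimum pseudometric $d_t$ from the proof of Theorem \ref{mainth} is not needed. We can apply the classical McShane and Whitney formulas directly with $\rho_t$.

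\textbf{Step 1: $f(\cdot,t)$ is $1$-Lipschitz for $\rho_t$ on $S$.} As in the proof of Theorem \ref{mainth}, the fuzzy Lipschitz condition gives
\[
\phi(|f(x,t)-f(y,t)|)\le \frac{K(t)}{g(t)}\bigl(1-M(x,y,t)\bigr).
\]
Applying $\phi^*$, which is increasing, and using $\mathrm{id}\le\phi^*\circ\phi$, we get $|f(x,t)-f(y,t)|\le \rho_t(x,y)$ for all $x,y\in S$. This is inequality (\ref{eq:Lip}).

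\textbf{Step 2: $f^M$ and $f^W$ are well-defined $1$-Lipschitz extensions.} We follow the McShane--Whitney argument.

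\emph{Extension.} For $x\in S$, taking $s=x$ gives $\rho_t(x,x)=0$, so $f^M(x,t)\ge f(x,t)$. By Step 1, $f(s,t)-\rho_t(x,s)\le f(x,t)$ for every $s\in S$, so $f^M(x,t)\le f(x,t)$. Hence $f^M(\cdot,t)$ agrees with $f(\cdot,t)$ on $S$.

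\emph{Finiteness.} Fix $s_0\in S$. The triangle inequality for $\rho_t$ and Step 1 give, for every $s\in S$,
\[
f(s,t)-\rho_t(x,s)\le f(s_0,t)+\rho_t(s_0,s)-\rho_t(x,s)\le f(s_0,t)+\rho_t(x,s_0)<+\infty.
\]
The value is bounded below by the choice $s=s_0$. So $f^M(x,t)$ is a real number.

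\emph{Lipschitz bound.} For $x,y\in X$, the triangle inequality gives $f(s,t)-\rho_t(x,s)\ge f(s,t)-\rho_t(y,s)-\rho_t(x,y)$ for each $s$. Taking suprema, $f^M(x,t)\ge f^M(y,t)-\rho_t(x,y)$. By symmetry, $|f^M(x,t)-f^M(y,t)|\le\rho_t(x,y)$. The argument for $f^W$ is the same with infima.

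\textbf{Step 3: back to the fuzzy inequality.} Let $\hat f\in\{f^M,f^W\}$. Applying the increasing map $\phi$ to $|\hat f(x,t)-\hat f(y,t)|\le\rho_t(x,y)$ and using $\phi\circ\phi^*\le\mathrm{id}$ (valid by left-continuity and $\phi(0)=0$) gives
\[
\phi(|\hat f(x,t)-\hat f(y,t)|)\le \frac{K(t)}{g(t)}\bigl(1-M(x,y,t)\bigr).
\]
Multiplying by $g(t)$ yields $1-N_{\phi,g}(\hat f(x,t),\hat f(y,t),t)\le K(t)\bigl(1-M(x,y,t)\bigr)$. Since $t>0$ was arbitrary, $\hat f$ is fuzzy Lipschitz with the same dilation $K(t)$.

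The main obstacle is finiteness in Step 2. The trouble is that $\phi^*$ may take the value $+\infty$, which forced the extra boundedness hypothesis in Theorem \ref{mainth}. Here this is settled by reading the hypothesis that $\rho_t$ is a metric into $[0,+\infty)$ as saying that $\rho_t$ is finite-valued. The remaining care is to check that the triangle inequality for $\rho_t$ is used only for points of $X$, which the hypothesis covers.
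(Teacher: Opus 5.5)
Your proof is correct and is essentially the paper's argument. The corollary comes from the proof of Theorem \ref{mainth}: the triangle inequality for $\rho_t$ forces the path-infimum pseudometric $d_t$ to equal $\rho_t$, and finiteness of $\rho_t$ takes the place of the bound $\frac{K(t)}{g(t)}(1-M(x,y,t))<\phi(+\infty)$. The McShane and Whitney formulas are then the classical ones for $\rho_t$, which you verify by hand; the only unstated point is the trivial assumption $S\neq\varnothing$, needed to pick $s_0$.
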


	In \cite[Section 6]{JFRLSP21}, the authors present McShane-Whitney extension formulas through two specific examples. However, these formulas depend on the existence of the inverse of the function  $\phi: [0,+\infty)\to [0,+\infty)$ given by $\phi(x)=\min\{x,1\},$ for all $x\in [0,+\infty).$ But, of course, $\phi$ is not bijective. 
	
	We next provide correct formulas for those examples based on our previous discussion.

	\begin{example}
		Take a metric space $(X,d)$ and  construct the fuzzy metric space  $(X,M_k,\ast_{\text{\L}})$  defined by
		$$
		M_k(x,y,t)= 1- \frac{\min\{d(x,y),k\}}{h(t)}, \quad x,y\in X,
		$$
		where $\ast_{\text{\L}}$ is the \L ukasiewicz t-norm, $k>0$ and $h:(0,+\infty)\to (k,+\infty)$ is an increasing continuous function (see \cite[Example 6]{GregMoriSape10}).
		
		Consider the left-continuous Euclidean fuzzy metric space $(\mathbb{R},N_E,\ast_{\text{\L}})$, given by
		$$
		N_E(x,y,t)= 1-\frac{\min\left\{|x-y|,1\right\}}{2}, \quad x,y \in \mathbb{R}.
		$$
		Notice that, in this case, $\phi(x)=\frac{\min\left\{x,1\right\}}{2}$ so
		
		$$\phi^*(x)=\begin{cases}2x&\text{ if }0\leq x<\frac{1}{2},\\
			+\infty&\text{ if }x\geq \frac{1}{2},\end{cases}$$ for all $x\in [0,+\infty].$

		Let $S \subseteq X.$
		Suppose that the  function $f:(S,M_k,\ast_{\text{\L}})\times (0,+\infty) \to (\mathbb{R},N_E, \ast_{\text{\L}})$ is a fuzzy contractive map verifying that 
		$$1- N_E(f(x,t),f(y,t),t) \le K(t) \big( 1- M_k(x,y,t) \big), \quad x, y \in S, t>0,
		$$
		where $K(t)<\frac{1}{2}$, which can be rewritten as
		$$\phi(|f(x,t)-f(y,t)|)=\frac{\min\{|f(x,t)-f(y,t)|,1\}}{2}\leq \frac{K(t)}{h(t)}\min\{d(x,y),k\}.$$
		So there is a function $Q:\mathbb{R}^+\to\mathbb{R}^+$ such that
		$$\phi(|f(x,t)-f(y,t)|)\leq Q(t)\min\{d(x,y),k\},$$
		for all $x,y\in S$ and all $t>0.$ Since $\phi^*$ is increasing, then
		\begin{align*}
			|f(x,t)-f(y,t)|&\leq \phi^*(\phi(|f(x,t)-f(y,t)|) )\leq \phi^*(Q(t)\min\{d(x,y),k\})\\
			&=2Q(t)\min\{d(x,y),k\},
		\end{align*}
		for all $x,y\in S$ and all $t>0$ since $Q(t)\min\{d(x,y),k\}<\frac{1}{2}.$

		Then the McShane and Whitney extensions of $f$ to $X\times (0,+\infty)$ are given by
		$$
		{{f^M}}(x,t):=\sup_{s\in S}\big\{f(s,t)-2Q(t) \,\min\{d(s,x),k\}\big\}, \quad x \in X,
		$$
		and
		$$
		f^W(x,t):=\inf_{s\in S}\big\{f(s,t)+2Q(t) \,\min\{d(s,x),k\}\big\}, \quad x \in X.
		$$

		Thus, a possible family of extensions would be given by functions as
		\begin{align*}
			f_{\alpha(t)}(x,t) =& \alpha(t) \, {{f^M}}(x,t) + (1-\alpha(t) )\, {{f^W}}(x,t)
			\\
			=&
			\alpha(t) \, \sup_{s\in S}\big\{f(s,t)-2Q(t) \,\min\{d(s,x),k\}\big\}  \\&+ (1-\alpha(t) )\, \inf_{s\in S}\big\{f(s,t)+2Q(t) \,\min\{d(s,x),k\}\big\},
		\end{align*}
		for $\alpha: [0,+\infty) \to [0,1]$,	$x \in X,$ $t>0.$

	\end{example}	
	
	\color{black}
	
	\begin{example}
		Consider the stationary fuzzy metric $(M,\cdot)$ given by 
		$$
		M(x,y,t)= e^{-d(x,y)}, \quad x, y \in X,
		$$
		where $(X,d)$ is a metric space.
		As above, let $S\subseteq X$ and $f:(S,M,\cdot)\to (\mathbb{R},N_E,\ast_{\text{\L}})$ be a fuzzy Lipschitz function such that for each $t>0$ we can find $0<K(t)<\frac{1}{2}$ such that
		$$
		1- N_E(f(x),f(y),t) \le K(t) \big( 1- M(x,y,t) \big), \quad x, y \in S,
		$$
		for all $x,y\in X,$  which can be rewritten as
		$$\frac{\min\{|f(x,t)-f(y,t)|,1\}}{2}\leq K(t)\left(1-e^{-d(x,y)}\right)\!.$$
		Notice that since $\cdot\geq \ast_{\text{\L}},$ we have by \cite[Proposition 5]{GregMoriSape10} that $1-e^{-d(x,y)}$ is a metric on $X$.
		
		As in the previous example, for functions $\alpha: [0,+\infty) \to [0,1]$, we obtain that a
		family of fuzzy Lipschitz extensions of $f$ is
		$$
		f_{\alpha(t)}(x,t) = \alpha(t) \, {{f^M}}(x,t) + \big(1-\alpha(t) \big)\, {{f^W}}(x,t), \quad x \in X, \,\, t>0,
		$$
		where
		$$
		{{f^M}}(x,t)=\sup_{s\in S} \Big\{f(s,t)-2K(t) \, \Big( 1- e^{- d(s,x)} \Big) \Big\}, \quad x \in X, \,\, t>0,
		$$
		and
		$$
		f^W(x,t)=\inf_{s\in S} \Big\{f(s,t)+2K(t) \, \Big( 1- e^{- d(s,x)} \Big) \Big\},\quad x \in X, \,\,  t>0.
		$$
	\end{example}


\begin{thebibliography}{99}
		
		\bibitem{BookQuantales}
		
		
		
		P.~Eklund, J.~Guti\'errez-Garc\'{\i}a, U.~H\"ohle, and J.~Kortelainen,
		\emph{Semigroups in complete lattices. {Q}uantales, modules and related
			topics}, Springer, 2018.
		
		
		\bibitem{GregMoriSape10}
		V. Gregori, S. Morillas,
		and A. Sapena, \textit{  On a class of completable fuzzy metric spaces,}  Fuzzy Sets and Systems 161 (2010), 2193--2205.
		
		\bibitem{JFRLSP21} E. Jim\'enez-Fern\'andez, J. Rodr\'{\i}guez-L\'opez, and E. A. S\'anchez-P\'erez, \emph{McShane-Whitney extensions for fuzzy Lipschitz maps}, Fuzzy Sets and Systems 406 (2021), 66-81.
	\end{thebibliography}
\end{document}